\theoremstyle{plain}
\newtheorem{theorem}{Theorem}[section]
\newtheorem{lemma}[theorem]{Lemma}
\newtheorem{corollary}[theorem]{Corollary}
\newtheorem{proposition}[theorem]{Proposition}
\newtheorem{question}[theorem]{Question}
\theoremstyle{definition}
\newtheorem{remark}[theorem]{Remark}
\renewcommand{\emptyset}{\varnothing}
\DeclareMathOperator{\uhr}{\upharpoonright}
\DeclareMathOperator{\N}{\mathbb{N}}
\DeclareMathOperator{\R}{\mathbb{R}}
\DeclareMathOperator{\conv}{conv}
\numberwithin{equation}{section}
\begin{document}

\title{Selections for Paraconvex-Valued Mappings on non-paracompact domains}

\thanks{The work of the author is based upon research supported by the German
Academic Exchange Service (DAAD)}

\author{Narcisse Roland Loufouma Makala}

\address{School of Mathematical Sciences, University of KwaZulu-Natal,
  Westville Campus, Private Bag X54001, Durban 4000, South Africa}

\email{roland@aims.ac.za}

\subjclass[2010]{Primary 54C60, 54C65;  Secondary 54D15.}

\keywords{Set-valued mapping, lower semi-continuous, selection,
paraconvexity.}

\begin{abstract}
   We prove that Michael's paraconvex-valued selection theorem
   for paracompact spaces remains true  for $\mathscr{C}'(E)$-valued
   mappings defined on collectionwise normal spaces. Some possible
   generalisations are also given.
\end{abstract}

\date{\today}
\maketitle

\section{Introduction}
\label{section1}

For a topological space $E$, let $2^E$ be the family of all nonempty
subsets of $E$, and $\mathscr{F}(E)$ be the subfamily of $2^E$
consisting of all closed members of $2^E$. A set-valued mapping
$\varphi:X\to 2^E$ is \emph{lower semi-continuous}, or
\emph{l.s.c.}, if the set
\[
\varphi^{-1}(U)=\{x\in X: \varphi(x)\cap U\neq \emptyset\}
\]
is open in $X$ for every open $U\subset E$. A set-valued mapping
$\psi:X\to 2^E$ is \emph{upper-semi continuous}, or \emph{u.s.c.},
if the set
\[
\psi^{\#}(U)=\{x\in X:\psi(x)\subset U\}
\]
is open in $X$ for every open $U\subset E$. Equivalently, $\psi$ is
u.s.c. if $\psi^{-1}(F)$ is closed in $X$ for every closed subset
$F\subset E$. A single-valued mapping $f:X\to E$ is a \emph{selection} for
$\varphi:X\to 2^E$ if $f(x)\in \varphi(x)$ for every $x\in X$.
\medskip

Let $E$ be a normed space. Throughout this paper, we will use $d$ to
denote the metric on $E$ generated by the norm of $E$. Following
\cite{michael2}, a subset $P$ of $E$ is called
$\alpha$-\emph{paraconvex}, where $0\leq\alpha\leq1$, if whenever $r>0$
and $d(p,P)<r$ for some $p\in E$, then
\[
d(q,P)\leq\alpha r\ \text{for all}\ q\in\conv(B_r(p)\cap P).
\]

Here, $B_r(x)=\{y\in E:d(x,y)<r\}$, and $\conv(A)$ is the
\emph{convex hull} of $A$. The set $P$ is called \emph{paraconvex}
if it is $\alpha$-paraconvex for some $\alpha<1$. A closed set is
$0$-paraconvex if and only if it is convex. In the sequel, we will
use $\mathscr{F}_{\alpha}(E)$ to denote all $\alpha$-paraconvex
members of $\mathscr{F}(E)$ (i.e., all nonempty closed
$\alpha$-paraconvex subsets of $E$).\medskip

Recall that a space $X$ is \emph{paracompact} if it is Hausdorff and
every open cover of $X$ has a locally finite open
refinement. In \cite{michael}, E. Michael proved that if $X$ is paracompact and
$E$ is a Banach space, then every l.s.c.\ convex-valued mapping
 $\varphi:X\to\mathscr{F}(E)$ has a continuous selection
(see, \cite[Theorem 3.2"]{michael}). In \cite{michael2}, E. Michael
generalised this result by replacing ``convexity" with
``$\alpha$-paraconvexity" for a fixed $\alpha<1$, and proved the
following theorem.

\begin{theorem}[\text{\cite[Theorem 2.1]{michael2}}]
\label{m-sel-thm}Let $X$ be a paracompact space, $E$ be a Banach
space, and let $\varphi:X \to\mathscr{F}_{\alpha}(E)$ be an l.s.c.\
mapping, where $\alpha<1$. Then, the following hold\textup{:}
\begin{itemize}
\item[(a)] $\varphi$ has a continuous selection.
\item[(b)]If $r>0$ and $g:X\to E$ is continuous such that
$d(g(x),\varphi(x))<r$ for all $x\in X$, then there exists
$\delta>0$ and a continuous selection $f$ for $\varphi$ such that
$d(g(x),f(x))<\delta r$, $x\in X$.
\end{itemize}
\end{theorem}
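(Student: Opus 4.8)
The plan is to prove (a) and (b) together by the classical method of successive approximations, with the single modification that $\alpha$-paraconvexity, unlike convexity, forces one to carry the approximating ``radius'' along as a positive continuous function $r_n\colon X\to(0,\infty)$ (a constant in the situation of (b)) rather than a fixed $\varepsilon$, and to apply the paraconvexity inequality only to convex combinations of points all lying in a single controlled ball. Both conclusions will come out of the same recursive construction, started from different initial data.

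First I would record two soft facts. (i) If $\psi\colon X\to 2^E$ is l.s.c., $h\colon X\to E$ is continuous, $\rho\colon X\to(0,\infty)$ is continuous, and the map $x\mapsto B_{\rho(x)}(h(x))\cap\psi(x)$ has nonempty values, then it is again l.s.c.; this is the routine ``moving ball'' argument — around a witness point take a small ball, leave a little slack in the radius, and use continuity of $h$ and $\rho$. (ii) On a paracompact (hence normal) space every finite-valued upper semicontinuous function is dominated by a continuous one: cover $X$ by open sets on which the function stays below a constant, and glue the constants with a subordinate locally finite partition of unity. I also use freely that paracompactness yields partitions of unity subordinate to any locally finite open cover.

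For the initialization: in case (a), fix any $e\in E$, put $f_0\equiv e$, note that $x\mapsto d(e,\varphi(x))$ is upper semicontinuous and finite (the values of $\varphi$ are nonempty), and use (ii) to pick a continuous $r_0\colon X\to(0,\infty)$ with $d(f_0(x),\varphi(x))<r_0(x)$ for all $x$; in case (b), put $f_0:=g$ and $r_0\equiv r$, so that the hypothesis gives $d(f_0(x),\varphi(x))<r_0(x)$ directly. Then fix once and for all a constant $\beta$ with $\alpha<\beta<1$ and set $\gamma:=\beta-\alpha>0$. The heart of the matter is the improvement step: from continuous $f_n$ and continuous $r_n\colon X\to(0,\infty)$ with $d(f_n(x),\varphi(x))<r_n(x)$, produce continuous $f_{n+1}$ with $d(f_{n+1}(x),\varphi(x))<\beta r_n(x)=:r_{n+1}(x)$ and $\|f_{n+1}(x)-f_n(x)\|<2r_n(x)$. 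For this set $\varphi_n(x):=B_{r_n(x)}(f_n(x))\cap\varphi(x)$, which is nonempty and, by (i), l.s.c. For each $x_0$ choose $z_{x_0}\in\varphi_n(x_0)$ and form the open neighbourhood $W_{x_0}=\{x:r_n(x)>\tfrac12 r_n(x_0)\}\cap\varphi_n^{-1}\bigl(B_{\eta(x_0)}(z_{x_0})\bigr)$, where $\eta(x_0)$ is a suitably small multiple of $r_n(x_0)$; refine $\{W_{x_0}\}$ to a locally finite open cover $\{V_i\}$ with $V_i\subseteq W_{x_0(i)}$, take a subordinate partition of unity $\{\lambda_i\}$, put $y_i:=z_{x_0(i)}$, and define $f_{n+1}:=\sum_i\lambda_i y_i$. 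Then, for any $x$ and any $i$ with $\lambda_i(x)>0$, there is $q_i\in\varphi_n(x)$ with $\|y_i-q_i\|<\eta(x_0(i))$, and — the crucial point — every such $q_i$ lies in the \emph{single} set $B_{r_n(x)}(f_n(x))\cap\varphi(x)$. Hence $\sum_i\lambda_i(x)q_i$ lies in $\conv\bigl(B_{r_n(x)}(f_n(x))\cap\varphi(x)\bigr)$, so $\alpha$-paraconvexity, applied with $p=f_n(x)$ and radius $r_n(x)$, gives $d\bigl(\sum_i\lambda_i(x)q_i,\varphi(x)\bigr)\le\alpha r_n(x)$; since the $\eta(x_0(i))$ are small relative to $r_n(x)$ (here the lower bound $r_n(x)>\tfrac12 r_n(x_0(i))$ built into $W_{x_0}$ is used), $\|f_{n+1}(x)-\sum_i\lambda_i(x)q_i\|<\tfrac12\gamma r_n(x)$, whence $d(f_{n+1}(x),\varphi(x))<(\alpha+\tfrac12\gamma)r_n(x)<\beta r_n(x)$, and likewise $\|f_{n+1}(x)-f_n(x)\|\le\max_i\|y_i-f_n(x)\|\le\max_i\bigl(\|y_i-q_i\|+\|q_i-f_n(x)\|\bigr)<2r_n(x)$.

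Iterating the improvement step produces continuous $f_n$ with $r_n(x)=\beta^n r_0(x)$, $d(f_n(x),\varphi(x))<r_n(x)$, and $\|f_{n+1}(x)-f_n(x)\|<2\beta^n r_0(x)$. Since $r_0$ is locally bounded, $(f_n)$ is locally uniformly Cauchy, hence — and this is where the hypothesis that $E$ is Banach enters — converges to a continuous $f$; from $d(f_n(x),\varphi(x))\to0$ and closedness of $\varphi(x)$ we get $f(x)\in\varphi(x)$, proving (a). In case (b), $r_0\equiv r$ is constant, so $\|f(x)-g(x)\|\le\sum_{n\ge0}\|f_{n+1}(x)-f_n(x)\|<\sum_{n\ge0}2\beta^n r=\tfrac{2}{1-\beta}\,r$, and $\delta:=\tfrac{2}{1-\beta}$ works. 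The one genuine obstacle — the point at which this departs from Michael's convex selection argument — is that a partition-of-unity average of points $\varepsilon$-close to $\varphi(x)$ is in general only $\varepsilon$-close to $\overline{\conv(\varphi(x))}$, which for a merely $\alpha$-paraconvex value may be far from $\varphi(x)$; one therefore cannot begin from an honest $\varepsilon$-selection, and at every stage $\varphi$ must be cut down by the ball $B_{r_n(x)}(f_n(x))$ so that all points entering the average lie in one ball to which the defining inequality of $\alpha$-paraconvexity applies — $\alpha<1$ being exactly what then makes the radii $r_n$ decay geometrically.
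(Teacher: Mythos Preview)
Your argument is correct, and the core mechanism---cut $\varphi$ down by the ball $B_{r_n(x)}(f_n(x))$ so that every point entering a convex combination lies in one ball, then invoke $\alpha$-paraconvexity to contract the radius geometrically---is exactly Michael's idea, which the paper reproduces in its proof of Theorem~\ref{main-theorem}. There are, however, two organisational differences worth noting. First, the paper (following Michael) does not build the partition-of-unity average by hand at each stage: it observes that $x\mapsto\overline{\conv\bigl(\varphi(x)\cap B_{\gamma^n r}(f_n(x))\bigr)}$ is an l.s.c.\ \emph{convex}-valued map and simply applies the already-established convex selection theorem to it as a black box, obtaining $f_{n+1}$ in one line; $\alpha$-paraconvexity is then used only to bound $d(f_{n+1}(x),\varphi(x))$. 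Your version is more self-contained but re-proves a piece of the convex selection theorem inside the induction. Second, for part~(a) the paper does not introduce a variable initial radius $r_0(x)$; instead it produces an increasing closed cover $\{A_n\}$ of $X$ with $A_n\subset\varphi^{-1}(B_{\beta^n}(0))$, applies (b) on each $A_n$ with a constant $r$, and glues the resulting selections. Your device of carrying $r_n$ as a continuous function is a clean alternative that dispenses with this exhaustion step, at the cost of the extra soft fact~(ii); the paper's route keeps (b) entirely in the constant-radius regime (yielding the slightly sharper $\delta>\sum_{i\ge0}\gamma^i=1/(1-\gamma)$ rather than your $2/(1-\beta)$) but needs the separate covering argument.
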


It should be remarked that Theorem \ref{m-sel-thm} is not true for
$\alpha=1$. Indeed, V. Klee \cite{klee} proved that every subset of
an inner-product space $H$ (in particular, of a Hilbert space $H$)
is $1$-paraconvex, while not every l.s.c. mapping $\varphi:X\to
\mathscr{F}_1(H)$, from a paracompact space $X$ has a continuous
selection, because in this case, $\mathscr{F}_1(H)=\mathscr{F}(H)$.
\medskip

Let us now state the main purpose of this paper. Namely, in Section
\ref{section2}, we prove a collectionwise normal version of Theorem
\ref{m-sel-thm} (see, Theorem \ref{main-theorem}), thus generalising
\cite[Theorem 3.2']{michael} (see also \cite{choban-valov}; for
alternative proofs, see \cite{gutev-loufouma,nedev}) in terms of
paraconvex sets. In Section \ref{section3}, we show how our arguments
can be used to generalise further some of these results.

\section{Collectionwise Normality, Paraconvexity and Selections}
\label{section2}

Recall that a $T_1$-space $X$ is \emph{$\tau$-collectionwise
normal}, where $\tau$ is an infinite cardinal number, if for every
discrete collection $\mathscr{D}$ of closed subsets of $X$, with
$|\mathscr{D}|\leq\tau$, there exists a discrete collection
$\big\{U_D: D\in \mathscr{D}\big\}$ of open subsets of $X$ such that
$D\subset U_D$ for every $D\in \mathscr{D}$. A space $X$ is
\emph{collectionwise normal} if it is $\tau$-collectionwise normal
for every $\tau$. It is well known that $X$ is normal if and only if
it is $\omega$-collectionwise normal. Clearly, collectionwise
normality lies between paracompactness and normality.\medskip

In what follows, for a space $E$, let $\mathscr{C}(E)=\{S\in\mathscr{F}(E)
:S\ \text{is compact}\}$, and $\mathscr{C}'(E)=\mathscr{C}(E)\cup \{E\}$.
Also, for a normed space $E$, we will use the subscript $\alpha$ to
denote all $\alpha$-paraconvex members of $\mathscr{C}(E)$ or
$\mathscr{C}'(E)$. Finally, $w(E)$ denotes the \emph{topological
weight} of $E$.

\begin{theorem}
\label{main-theorem}Let $X$ be a $\tau$-collectionwise normal space,
$E$ be a Banach space with $w(E)\leq\tau$, and $\varphi:X\to
\mathscr{C}'_{\alpha}(E)$ be an l.s.c.\ mapping, for some
$\alpha<1$. Then, the following hold:
\begin{itemize}
\item[(a)]$\varphi$ has a continuous selection.
\item[(b)]If $r>0$ and $g:X\to E$ is continuous such that
$d(g(x),\varphi(x))<r$ for all $x\in X$, then there exists
$\delta>0$ and a continuous selection $f$ for $\varphi$ such that
$d(g(x),f(x))<\delta r$, $x\in X$.
\end{itemize}
\end{theorem}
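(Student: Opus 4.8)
The plan is to carry out on the collectionwise-normal domain $X$ the same iteration by which Michael proves Theorem~\ref{m-sel-thm} on a paracompact domain, using the hypotheses $\varphi(x)\in\mathscr{C}'(E)$ and $w(E)\le\tau$ to replace every appeal to a locally finite partition of unity by the device behind the collectionwise-normal version of the \emph{convex} selection theorem (\cite[Theorem 3.2']{michael}; see also \cite{choban-valov,gutev-loufouma,nedev}). In that device the compactness of the non-trivial values $\varphi(x)$ is what keeps only finitely many pieces of a suitable $\tau$-indexed family relevant near a given point, while $\tau$-collectionwise normality is what turns the resulting discrete families of closed subsets of $X$ into discrete families of open sets; $\alpha$-paraconvexity enters exactly where convexity was used in the convex case, the only change being that a single step now gains a fixed factor $\beta<1$ in place of the factor $\frac12$. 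Since $\beta^{n}\to 0$, this loss is harmless.

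The core is a one-step improvement lemma: \emph{fix $\beta$ with $\alpha<\beta<1$; if $r>0$ and $g\colon X\to E$ is continuous with $d(g(x),\varphi(x))<r$ for all $x$, then there is a continuous $h\colon X\to E$ with $d(h(x),\varphi(x))<\beta r$ and $d(g(x),h(x))<r$ for all $x$.} To prove it, choose $\eta>0$ with $\alpha r+(\alpha+1)\eta<\beta r$, and, using $w(E)\le\tau$, a family $\{p_{s}:s\in S\}\subset E$ with $|S|\le\tau$ consisting of the centres of a $\sigma$-discrete open cover of $E$ by balls of radius $\eta$. For $s\in S$ put
\[
U_{s}=\{x\in X:\, d(g(x),p_{s})<r\ \text{and}\ d(p_{s},\varphi(x))<\eta\},
\]
an open subset of $X$; and $\{U_{s}:s\in S\}$ covers $X$, since for a given $x$ one may pick $y\in\varphi(x)$ with $d(g(x),y)<r$ and then any $p_{s}$ close enough to $y$ lies in $U_{s}$. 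The collectionwise-normal device, applied to the cover $\{U_{s}:s\in S\}$ together with $\varphi$ --- this being the only point at which the compactness of the values $\varphi(x)\neq E$ and the bound $w(E)\le\tau$ are used, the values equal to $E$ being harmless --- produces a continuous $h\colon X\to E$ such that, for each $x$, $h(x)$ is a convex combination $\sum_{i}\lambda_{i}p_{s_{i}}$ of finitely many of the points $p_{s}$ with $x\in U_{s}$. Then $h(x)\in B_{r}(g(x))$, so $d(g(x),h(x))<r$; and picking $q_{i}\in\varphi(x)$ with $\lVert p_{s_{i}}-q_{i}\rVert<\eta$ one has $\sum_{i}\lambda_{i}q_{i}\in\conv\bigl(B_{r+\eta}(g(x))\cap\varphi(x)\bigr)$, so since $d(g(x),\varphi(x))<r<r+\eta$ the $\alpha$-paraconvexity of $\varphi(x)$ gives $d\bigl(\sum_{i}\lambda_{i}q_{i},\varphi(x)\bigr)\le\alpha(r+\eta)$, and therefore $d(h(x),\varphi(x))\le\alpha(r+\eta)+\eta<\beta r$.

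Granting the lemma, (b) follows by iterating it from $g_{0}=g$, $r_{0}=r$: one obtains continuous $g_{n}$ with $d(g_{n}(x),\varphi(x))<r_{n}:=\beta^{n}r$ and $d(g_{n}(x),g_{n+1}(x))<r_{n}$, whence $\sum_{n}r_{n}=r/(1-\beta)<\infty$ and, $E$ being complete, the $g_{n}$ converge uniformly to a continuous $f$ with $d(f(x),\varphi(x))=\lim_{n}d(g_{n}(x),\varphi(x))=0$; thus $f(x)\in\varphi(x)$, and $d(g(x),f(x))\le\sum_{n}r_{n}=\delta r$ with $\delta=1/(1-\beta)$. For (a) one first needs a continuous map approximating $\varphi$: the map $x\mapsto\overline{\conv}\,\varphi(x)$ is again $\mathscr{C}'(E)$-valued (closed convex hulls of compact subsets of a Banach space are compact, by Mazur), convex-valued and l.s.c., so the collectionwise-normal convex selection theorem provides a continuous $f_{0}$ with $f_{0}(x)\in\overline{\conv}\,\varphi(x)$, and then $\alpha$-paraconvexity bounds $d(f_{0}(x),\varphi(x))$ by $\alpha\,\diam\varphi(x)$; a variable-radius form of the iteration above, begun at $f_{0}$, converges to a continuous selection of $\varphi$.

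The step I expect to be the real obstacle is the use of the collectionwise-normal device inside the lemma: building, from $\{U_{s}\}$ and $\varphi$, a \emph{continuous} $h$ whose value at each point is a convex combination of finitely many relevant $p_{s}$ --- which means organizing the construction over countably many rounds of discrete families, expanded via $\tau$-collectionwise normality, with the compactness of the values of $\varphi$ substituting for the local finiteness of $\{U_{s}\}$ that is unavailable on a merely collectionwise-normal space --- while keeping the points $q_{i}$ inside a single ball $B_{r+\eta}(g(x))$ so that the definition of $\alpha$-paraconvexity applies verbatim. Once the lemma is established, the iteration, the completeness argument, the constants in (b), and the reduction of (a) are routine, exactly as in \cite{michael2}.
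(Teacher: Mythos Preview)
Your iterative scheme for (b) matches the paper's, but the paper obtains the one-step improvement by a different and much cleaner route that sidesteps exactly the obstacle you flag. You aim to produce $h(x)$ as a finite convex combination $\sum_i\lambda_i p_{s_i}$ of centres from a fixed $\tau$-indexed net, which is tantamount to asking for a locally finite partition of unity subordinate to your cover $\{U_s\}$; on a merely $\tau$-collectionwise normal space such partitions need not exist, and your appeal to ``the compactness of the values of $\varphi$ substituting for local finiteness'' is not substantiated --- in your cover the sets $U_s$ are determined by $g$ and by distances to $\varphi(x)$, and there is no visible mechanism by which compactness of $\varphi(x)$ forces only finitely many $U_s$ to matter near a given point. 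The paper instead forms, at each stage, the auxiliary l.s.c.\ \emph{convex}-valued mapping $\varphi_{n+1}(x)=\overline{\conv\bigl(\varphi(x)\cap B_{\gamma^{n}r}(f_n(x))\bigr)}$, notes that it is a multi-selection of the $d$-continuous convex-valued $\psi_{n+1}(x)=\overline{B_{\gamma^{n}r}(f_n(x))}$ with $\varphi_{n+1}(x)$ compact whenever $\varphi_{n+1}(x)\neq\psi_{n+1}(x)$ (if $\varphi(x)=E$ the two coincide; if $\varphi(x)$ is compact so is $\varphi_{n+1}(x)$, by Mazur), and then invokes the ready-made black box Lemma~\ref{lemma1} (\cite[Lemma~4.2]{gutev-al}) to obtain a continuous selection $f_{n+1}$ of $\varphi_{n+1}$. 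Paraconvexity is applied only afterwards, to pass from $f_{n+1}(x)\in\varphi_{n+1}(x)$ to $d(f_{n+1}(x),\varphi(x))\le\alpha\gamma^{n}r<\gamma^{n+1}r$. Thus all of the collectionwise-normal machinery is encapsulated in Lemma~\ref{lemma1} and never re-engineered.

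Your reduction of (a) to (b) also diverges from the paper and is left as a sketch: from $f_0(x)\in\overline{\conv}\,\varphi(x)$ you get only $d(f_0(x),\varphi(x))\le\alpha\,\diam\varphi(x)$, which is not uniformly bounded in $x$, so (b) cannot be applied directly, and the ``variable-radius form of the iteration'' you invoke would itself require the unproved lemma with an $x$-dependent radius. The paper proceeds differently: Proposition~\ref{prop1} (established via a u.s.c.\ compact-valued multi-selection of $\varphi$, available from \cite{choban-valov,nedev}) supplies an increasing closed cover $\{A_n\}$ of $X$ with $A_n\subset\varphi^{-1}(B_{\beta^{n}}(0))$; on each $A_n$ a constant map furnishes a uniform initial approximation, (b) is applied there to produce selections $f_n$ with $f_{n+1}\uhr A_n=f_n$, and these are glued along an increasing open cover sandwiched between the $A_n$, following \cite[Theorem~2.1]{michael2}.
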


To prepare for the proof of Theorem \ref{main-theorem}, we need the
following proposition. In the proof of this proposition and what
follows, a set-valued mapping $\psi:X\to 2^E$ is a
\emph{multi-selection} (or, a \emph{set-valued selection}) for
another set-valued mapping $\varphi:X\to 2^E$ if
$\psi(x)\subset\varphi(x)$, for every $x\in X$.

\begin{proposition}
\label{prop1} Let $X$ be a $\tau$-collectionwise normal space, $E$
be a completely metrizable space with $w(E)\leq\tau$,
$\{V_n:n\in\N\}$ an increasing open cover of $E$, and
$\varphi:X\to\mathscr{C}'(E)$ an l.s.c.\ mapping. Then,
there exists an increasing closed cover $\{A_n:n\in\N\}$ of $X$ such
that $A_n\subset\varphi^{-1}(V_n)$, for every $n\in\N$.
\end{proposition}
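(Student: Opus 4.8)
The plan is to reduce the statement to a monotone "shrinking" problem and to solve it separately on the two natural pieces of $X$ cut out by whether $\varphi(x)$ is compact or equal to $E$.

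First I would make some harmless reductions. A $V_n$ equal to $\varnothing$ forces $A_n=\varnothing$ and causes no trouble, while a $V_n$ equal to $E$ gives $\varphi^{-1}(V_n)=X$ and lets us finish trivially; so assume $\varnothing\neq V_n\subsetneq E$ for all $n$. Since $E$ is metrizable it is perfectly normal and countably paracompact, so the increasing open cover $\{V_n\}$ of $E$ can be interpolated by open sets $G_n$ with $\overline{G_n}\subseteq G_{n+1}\cap V_n$ and $\bigcup_n G_n=E$. I would also record that $\{\varphi^{-1}(V_n)\}$ is itself an increasing open cover of $X$: it is increasing and (by lower semi-continuity) open, and it covers $X$ because a compact $\varphi(x)$ is absorbed by the increasing open cover $\{V_n\}$ of $E$, while if $\varphi(x)=E$ then $\varphi(x)$ meets every nonempty $V_n$.

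Next, the compact-valued part. Put $C_n:=X\setminus\varphi^{-1}(E\setminus\overline{G_n})=\{x:\varphi(x)\subseteq\overline{G_n}\}$. Since $E\setminus\overline{G_n}$ is open and $\varphi$ is l.s.c., $\varphi^{-1}(E\setminus\overline{G_n})$ is open, so $C_n$ is closed; clearly $C_n\subseteq C_{n+1}$ and $C_n\subseteq\{x:\varphi(x)\subseteq V_n\}\subseteq\varphi^{-1}(V_n)$. Moreover $\bigcup_n C_n\supseteq X_1:=\{x\in X:\varphi(x)\ \text{is compact}\}$, because a compact $\varphi(x)$ lies in some $G_n\subseteq\overline{G_n}$. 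Thus the $C_n$ already handle $X_1$.

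It remains to produce an increasing sequence $\{D_n\}$ of closed subsets of $X$ with $D_n\subseteq\varphi^{-1}(V_n)$ and $\bigcup_n D_n\supseteq X_0:=X\setminus X_1=\{x:\varphi(x)=E\}$; then $A_n:=C_n\cup D_n$ is closed, increasing, contained in $\varphi^{-1}(V_n)$, and $\bigcup_n A_n\supseteq X_1\cup X_0=X$, as wanted. Since $X_0\subseteq\varphi^{-1}(V_n)$ for every $n$ and the open sets $\varphi^{-1}(V_n)$ are increasing, it is enough to realise $X_0$ as a subset of some $F_\sigma$ subset of the single open set $\varphi^{-1}(V_1)$, and then let $D_n$ be the union of the first $n$ closed pieces. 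This is the only place where $\tau$-collectionwise normality of $X$ and the bound $w(E)\leq\tau$ enter, and it is the main obstacle. The difficulty is genuine: an l.s.c.\ map can "blow up", so a discrete family in $E$ need not pull back to a discrete family in $X$, and the only closed sets one gets directly from $\varphi$ are of the form $\{x:\varphi(x)\subseteq F\}$ with $F$ closed in $E$, which see nothing of $X_0$. The plan is to fix a $\sigma$-discrete base $\mathscr{B}=\bigcup_k\mathscr{B}_k$ of $E$ with $|\mathscr{B}|\leq w(E)\leq\tau$, arranged so that the members of $\mathscr{B}_k$ have diameters tending to $0$ and each $\{\overline{B}:B\in\mathscr{B}_k\}$ is discrete; to note that $X_0=\bigcap\{\varphi^{-1}(B):B\in\mathscr{B},\ B\neq\varnothing\}$, an intersection of at most $\tau$ open sets; and then to use $\tau$-collectionwise normality applied to the discrete closed families $\{\{x:\varphi(x)\cap B=\varnothing\}:B\in\mathscr{B}_k\}$ (these are discrete where $\varphi$ has small compact values, which is exactly the region relevant to separating $X_0$), expanding them to discrete open families, and combining this data with a representation $V_1=\bigcup_k F_k$ of $V_1$ by closed subsets of $E$ to squeeze $X_0$ into a countable union of closed subsets of $\varphi^{-1}(V_1)$. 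Once the $D_n$ are in hand, the verification that $A_n:=C_n\cup D_n$ has all the required properties is routine bookkeeping; the delicate step is precisely the construction of the $D_n$.
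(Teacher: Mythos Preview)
Your decomposition $X=X_0\cup X_1$ and the treatment of the compact-valued part via the closed sets $C_n=\{x:\varphi(x)\subseteq\overline{G_n}\}$ are correct and pleasant. The genuine gap is in the construction of the $D_n$ that are supposed to absorb $X_0=\{x:\varphi(x)=E\}$.

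The family you propose to feed into $\tau$-collectionwise normality,
\[
\big\{\,\{x:\varphi(x)\cap B=\varnothing\}:B\in\mathscr{B}_k\,\big\}=\big\{\,X\setminus\varphi^{-1}(B):B\in\mathscr{B}_k\,\big\},
\]
is \emph{not} discrete. If $\varphi(x)$ is compact it meets only finitely many members of the discrete family $\mathscr{B}_k$, so $x$ lies in \emph{all but finitely many} of the sets $X\setminus\varphi^{-1}(B)$; every neighbourhood of such an $x$ therefore meets infinitely many members of the family. At points of $X_0$ one is in none of these sets, but there is no evident single neighbourhood meeting at most one of them. Thus collectionwise normality cannot be invoked here, and nothing in the sketch actually produces a closed subset of $\varphi^{-1}(V_1)$ containing $X_0$. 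This is not a cosmetic issue: the only closed sets one obtains directly from an l.s.c.\ $\varphi$ are of the form $\{x:\varphi(x)\subseteq F\}$ with $F\subsetneq E$ closed, and all of these miss $X_0$; moreover a collectionwise normal space need not be countably paracompact, so the increasing open cover $\{\varphi^{-1}(V_n)\}$ cannot be shrunk on general grounds.

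The paper sidesteps the whole difficulty by quoting one external result: the theorem of Choban--Valov (also Nedev) that, under precisely the standing hypotheses ($X$ $\tau$-collectionwise normal, $E$ completely metrizable with $w(E)\leq\tau$, $\varphi:X\to\mathscr{C}'(E)$ l.s.c.), $\varphi$ admits a u.s.c.\ compact-valued multi-selection $\psi:X\to\mathscr{C}(E)$. One first shrinks $\{V_n\}$ inside $E$ to an increasing closed cover $\{F_n\}$ (metrizable spaces are normal and countably paracompact) and then sets $A_n=\psi^{-1}(F_n)$: upper semicontinuity of $\psi$ makes each $A_n$ closed, compactness of every $\psi(x)$ makes $\{A_n\}$ a cover, and $\psi\subset\varphi$ gives $A_n\subset\varphi^{-1}(F_n)\subset\varphi^{-1}(V_n)$. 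All the use of $\tau$-collectionwise normality and of the bound $w(E)\leq\tau$ is hidden inside that cited selection theorem; once $\psi$ is in hand the proof is three lines and no splitting into $X_0$ and $X_1$ is needed.
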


\begin{proof}
Since $\{V_n:n\in\N\}$ is an increasing open cover of $E$ and $E$ is
normal and countably paracompact (being metrizable), there exists
an increasing closed cover $\{F_n:n\in\N\}$ of $E$ such that
$F_n\subset V_n$, for every $n\in\N$. We then have
\begin{equation}
\label{eq1} \varphi^{-1}(F_n)\subset\varphi^{-1}(V_n),\
\text{for every}\ n\in\N.
\end{equation}

By a result of Choban and Valov \cite{choban-valov} (see also Nedev
\cite{nedev}), there exists a u.s.c.\ multi-selection $\psi:X\to
\mathscr{C}(E)$ for $\varphi$. Since $\psi$ is u.s.c., each
$\psi^{-1}(F_n)$, $n\in\N$, is closed in $X$. Since
$\psi(x)\subset\varphi(x)$, $x\in X$, we have

\begin{align*}
\psi^{-1}(F_n)=&\{x\in X:\psi(x)\cap F_n\neq\emptyset\}\\
\subset&\{x\in X:\varphi(x)\cap F_n\neq\emptyset\}=
\varphi^{-1}(F_n).
\end{align*}

The last inclusion and \eqref{eq1} imply that the family
$\{A_n:n\in\N\}$, with $A_n=\psi^{-1}(F_n)$, is an increasing closed
cover of $X$ such that $A_n\subset\varphi^{-1}(V_n)$, for every
$n\in\N$.
\end{proof}

Recall that if $(E,d)$ is a metric space, a mapping $\psi:X\to 2^E$
is \emph{$d$-l.s.c.} (resp. \emph{$d$-u.s.c.}) if, given
$\varepsilon>0$, every $x\in X$ admits a neighborhood $V$ such that
$\psi(x)\subset B_{\varepsilon}(\psi(z))$ (resp. $\psi(z)\subset
B_{\varepsilon}(\psi(x))$), for every $z\in V$. A mapping $\psi$ is said to be
\emph{$d$-continuous} if it is both $d$-l.s.c.\ and $d$-u.s.c.; $\psi$
is \emph{continuous} if it is both l.s.c.\ and u.s.c.; and
$\psi$ is \emph{$d$-proximal continuous} if it is both
l.s.c.\ and $d$-u.s.c.\ (see, \cite{gutev}). Every $d$-continuous or
continuous mapping is $d$-proximal continuous, but the converse is not
true (see, for instance \cite[Proposition 2.5]{gutev}). The following
Lemma also plays an essential role in the proof of Theorem \ref{main-theorem}.

\begin{lemma}[\text{\cite[Lemma 4.2]{gutev-al}}]
\label{lemma1}Let $X$ be a $\tau$-collectionwise normal space,
$E$ be a Banach space with $w(E)\leq\tau$, $\psi:X\to\mathscr{F}(E)$
be a $d$-proximal continuous convex-valued mapping, and
$\varphi:X\to\mathscr{F}(E)$ be an l.s.c. convex-valued
multi-selection for $\psi$ such that  $\varphi(x)$ is compact
whenever $\varphi(x)\neq\psi(x)$, $x\in X$. Then, $\varphi$ has a
continuous selection.
\end{lemma}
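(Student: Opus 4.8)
The plan is to run Michael's classical scheme of iterated $\varepsilon$-selections, with the inductive step recast so as to use the $\tau$-collectionwise normality of $X$ in place of paracompactness, together with the two special features of the pair $(\varphi,\psi)$: compactness of $\varphi(x)$ off the set $\{x:\varphi(x)=\psi(x)\}$, and $d$-upper semi-continuity of $\psi$. Everything reduces to a \emph{one-step lemma}: given a continuous $g\colon X\to E$ and $\varepsilon>0$ with $d(g(x),\varphi(x))<\varepsilon$ for all $x$, there is a continuous $g'\colon X\to E$ with $d(g'(x),\varphi(x))<\varepsilon/2$ and $\|g'(x)-g(x)\|<2\varepsilon$ for all $x$ (the first approximation being produced by the same argument with the bound on $\|g'-g\|$ omitted). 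Granting this, build continuous maps $f_0,f_1,\dots$ with $d(f_n(x),\varphi(x))<2^{-n}$ and $\sum_n\|f_{n+1}(x)-f_n(x)\|<\infty$ uniformly in $x$; completeness of $E$ makes $(f_n)$ uniformly convergent to a continuous $f$, and closedness of the values $\varphi(x)$ forces $d(f(x),\varphi(x))=0$, i.e.\ $f(x)\in\varphi(x)$. Reducing to the case $\psi\equiv E$ is not available: $\varphi$ need not be $\mathscr{C}'(E)$-valued, since its non-compact values may be proper subsets $\psi(x)\subsetneq E$, so $\psi$ genuinely enters.

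Over a \emph{paracompact} domain the one-step lemma is standard: for each $x$ pick $q_x\in\varphi(x)$ with $\|q_x-g(x)\|<\varepsilon$, set $U_x=\varphi^{-1}(B_{\varepsilon/8}(q_x))\cap\{z:\|g(z)-g(x)\|<\varepsilon\}$ (open, by lower semi-continuity of $\varphi$ and continuity of $g$), take a locally finite partition of unity $\{\lambda_x\}$ subordinate to $\{U_x\}$, and put $g'=\sum_x\lambda_x q_x$; convexity of $\varphi(z)$ then converts the bound $d(q_x,\varphi(z))<\varepsilon/8$ valid on $U_x$ into $d(g'(z),\varphi(z))<\varepsilon/2$, while the second clause of $U_x$ gives $\|g'(z)-g(z)\|<2\varepsilon$. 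The whole difficulty is that over a merely $\tau$-collectionwise normal $X$ the cover $\{U_x\}$ need not admit a locally finite partition of unity. Here the hypotheses are used: re-index with a base $\mathcal B$ of $E$ of size $\le\tau$ (since $w(E)\le\tau$), working with the open sets $\varphi^{-1}(B)$ for $B\in\mathcal B$ of diameter $<\varepsilon/8$ and the matching constraints on $g$. Where $\varphi(x)\neq\psi(x)$, the set $\varphi(x)$ is compact and meets only finitely many such $B$, which supplies the local finiteness needed to patch; by the compactness hypothesis, the points where this could fail are exactly those with $\varphi(x)=\psi(x)$, and there $d$-upper semi-continuity of $\psi$ yields a neighbourhood on which $\varphi(z)\subset\psi(z)\subset B_{\varepsilon/8}(\psi(x))$, so near such $x$ the values of $\varphi$ lie uniformly close to the fixed convex set $\psi(x)$, restoring the missing control. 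Finally, the defining property of $\tau$-collectionwise normality — discrete families of closed sets expand to discrete families of open sets — is invoked to build the surrogate for the partition of unity that keeps the convex-combination patching continuous.

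The remaining ingredients are routine and I would not dwell on them: the auxiliary mappings $x\mapsto\varphi(x)\cap B_r(h(x))$ that occur along the way remain lower semi-continuous and convex-valued whenever $d(h(x),\varphi(x))<r$; a convex combination of points each within $\delta$ of a convex set lies within $\delta$ of that set; and passage to the limit uses only completeness of $E$ and closedness of the values. The one genuine obstacle — where both special hypotheses on $(\varphi,\psi)$ are indispensable — is the one-step lemma over the non-paracompact domain: making the compact-valued regime (where $\varphi\neq\psi$) and the $d$-proximal continuous regime (where $\varphi=\psi$) cooperate via a single application of $\tau$-collectionwise normality.
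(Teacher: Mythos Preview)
The paper does not prove this lemma: it is quoted from \cite[Lemma~4.2]{gutev-al} and used as a black box in the proof of Theorem~\ref{main-theorem}. There is therefore no ``paper's own proof'' to compare your attempt against.

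On its own merits, your sketch identifies the right architecture (reduce to a one-step $\varepsilon$-approximation lemma and iterate) and correctly isolates where the difficulty lies: producing, over a merely $\tau$-collectionwise normal domain, the partition-of-unity surrogate that the paracompact argument gets for free. But the sketch does not actually resolve that difficulty. Two specific gaps: (i) the observation that a compact $\varphi(x)$ meets only finitely many basic sets of small diameter gives \emph{pointwise} finiteness, not local finiteness of the induced cover of $X$, so it does not by itself yield the patching you need on $\{x:\varphi(x)\neq\psi(x)\}$; and (ii) near a point with $\varphi(x)=\psi(x)$ the containment $\varphi(z)\subset B_{\varepsilon/8}(\psi(x))$ is of limited use because $\psi(x)$ may itself be non-compact, so you have not reduced to finitely many basic sets there either. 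The sentence ``the defining property of $\tau$-collectionwise normality \dots\ is invoked to build the surrogate for the partition of unity'' names the tool but not the construction; this is precisely the step that carries all the content, and in \cite{gutev-al} it takes real work (a controlled covering argument exploiting both regimes simultaneously, not a one-line appeal to expanding discrete families). If you want a self-contained proof rather than a citation, you should consult \cite{gutev-al} and reproduce the actual mechanism.
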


\begin{proof}[Proof of Theorem \ref{main-theorem}]
Let $X$, $E$, $\alpha$, and $\varphi$ be as in that theorem. We
first prove (b), and then (a).\medskip

(b). Since $\alpha<1$, there exists $\gamma\in\R$ such that
$\alpha<\gamma<1$. Then, $\sum^\infty_{i=0}\gamma^i<\infty$ (i.e.
the series $\sum^\infty_{i=0}\gamma^i$ converges). So, take $\delta$
such that $\sum_{i=0}^{\infty}\gamma^i< \delta$. To show that this
$\delta$ works, by induction, we shall define a sequence of
continuous maps $f_n:X\to E$, $n<\omega$, such that for all $n$ and
all $x\in X$,

\begin{equation}
\label{eq2}
d(f_n(x),\varphi(x))<\gamma^nr,
\end{equation}
\begin{equation}
\label{eq3}
d(f_n(x),f_{n+1}(x))\leq\gamma^nr.
\end{equation}

This will be sufficient because by \eqref{eq3}, $\{f_n:n<\omega\}$ is
a Cauchy sequence in $E$ which is complete, so it must converge to
some continuous map $f:X\to E$. By \eqref{eq2}, $f(x)\in\varphi(x)$, for
every $x\in X$, and by \eqref{eq3}

\begin{align*}
d(g(x),f_{n+1}(x))&=d(f_0(x),f_{n+1}(x))\\
&\leq d(f_0(x),f_1(x))+d(f_1(x),f_2(x))+\cdots+d(f_n(x),f_{n+1}(x))\\
&\leq r+\gamma r+\gamma^2r+\gamma^3r+\cdots+\gamma^nr\\
&=r\sum_{i=0}^{n}\gamma^i.
\end{align*}

Therefore, $d(f(x),g(x))\leq r\sum_{i=0}^{\infty}\gamma^i<\delta
r$.\medskip

Let $f_0=g$, which satisfies \eqref{eq2}. Suppose that $f_n$ has
been constructed for some $n\geq0$, and let us construct $f_{n+1}$.
Define a mapping $\psi_{n+1}:X\to\mathscr{F}(E)$ by
$\psi_{n+1}(x)=\overline{B_{\gamma^nr}(f_n(x))}$, $x\in X$. Then,
$\psi_{n+1}$ is $d$-proximal continuous (being $d$-continuous) and
convex-valued. Define another mapping
$\varphi_{n+1}:X\to \mathscr{F}(E)$ by
\[
\varphi_{n+1}(x)=\overline{\conv(\varphi(x)\cap
B_{\gamma^nr}(f_n(x)))},\ x\in X.
\]

By the inductive assumption, $\varphi_{n+1}(x)$ is never empty for
every $x\in X$, because $f_n$ satisfies \eqref{eq2} above.
Furthermore, by \cite[Propositions 2.3, 2.5, and 2.6]{michael},
$\varphi_{n+1}$ is l.s.c.\ and it is clearly convex-valued. Finally,
$\varphi_{n+1}$ is a multi-selection of $\psi_{n+1}$ and
$\varphi_{n+1}(x)\neq\psi_{n+1}(x)$ implies that $\varphi_{n+1}(x)$
is compact. Then, by Lemma \ref{lemma1}, $\varphi_{n+1}$ has
a continuous selection $f_{n+1}(x):X\to E$ such that
\[
d(f_n(x),f_{n+1}(x))\leq\gamma^nr,
\]
which is \eqref{eq3}. Since $\varphi(x)$ is $\alpha$-paraconvex for every $x\in X$, and
$\alpha<\gamma$, we have
\[
d(f_{n+1}(x),\varphi(x))\leq\alpha\cdot\gamma^nr<\gamma^{n+1}r,\
\text{for all}\ x\in X,
\]
that is, $f_{n+1}$ satisfies also \eqref{eq2}.\medskip

(a). Take $\beta>1$ such that $\varphi(x)\cap B_{\beta}(0)\neq
\emptyset$ for some $x\in X$, where $0$ is the origin of $E$. Let
\[
V_n=B_{\beta^n}(0),\ \text{for each}\ n\in\N.
\]

Then, each $V_n$ is open in $E$, and the family $\{V_n:n\in\N\}$ is
an increasing open cover of $E$. By Proposition \ref{prop1}, there
exists an increasing closed cover $\{B_n:n\in\N\}$ of $X$ such that
$B_n\subset\varphi^{-1}(V_n)$, for every $n\in\N$. Since $X$ is
normal, there are open sets $U_n\subset X$ such that $B_n\subset U_n
\subset\overline{U_n}\subset\varphi^{-1}(V_n)$ and $U_n\subset
U_{n+1}$, for each $n\in\N$. Letting $A_n=\overline{U_n}$ and following
the construction in the proof of (a) in \cite[Theorem 2.1]{michael2},
we get (using (b) above) a sequence of continuous selections $f_n:A_n\to E$ for
$\varphi\uhr A_n$ such that $f_{n+1}\uhr A_n=f_n$, $n\in\N$. Then, the
mapping $f:X\to E$ defined by $f\uhr A_n=f_n$, $n\in\N$, is a selection
for $\varphi$ which is continuous because each $f\uhr U_n=f_n\uhr U_n$,
$n\in\N$, is continuous and $\{U_n:n\in\N\}$ is an open cover of $X$.
The proof is completed.
\end{proof}

\begin{remark}
As it was already mentioned, the proof of (a) in Theorem \ref{main-theorem} follows
the proof in Theorem \ref{m-sel-thm} (see, \cite[Theorem 2.1]{michael2}).
However, the proof in \cite{michael2} contains a minor gap where the sets $A_n$,
$n\in\N$, were only assumed to be closed rather than $A_n=\overline{U_n}$, for some
increasing open cover $\{U_n:n\in\N\}$ of $X$. If the condition $A_n=\overline{U_n}$,
$n\in\N$, is not explicitly required, then the resulting selection $f:X\to E$ defined
by $f\uhr A_n=f_n$, $n\in\N$, may fail to be continuous. An example of such a situation
is given by the function $f(x)=\sin(1/x)$, $0<x\leq1$, $f(0)=0$ and
$A_n=\{0\}\cup[n^{-1},1]$.
\end{remark}

\section{Some Possible Generalisations}
\label{section3}

By \cite[Corollary 2.2]{michael2}, if $X$ is paracompact, $A\subset
X$ is closed and $Y$ is a closed paraconvex subset of a Banach space
$E$, then every continuous $g:A\to Y$ can be extended to a
continuous $f:X\to Y$. According to Dowker's extension theorem
\cite{dowker}, this implies that the same remains valid for $X$
being only collectionwise normal. As a rule, the theorems for the
existence of continuous selections for l.s.c.\ mappings originated
as a natural generalisation of extension theorems, see Michael
\cite{michael,michael6}. In view of the above, this brings the
question for a more natural setting of Theorem \ref{main-theorem}.
Namely, given $0\leq\alpha<1$ and a closed $\alpha$-paraconvex set
$Y$ of a Banach space $E$, let $\mathscr{C}_{\alpha}(Y)=\{S\in
\mathscr{C}_{\alpha}(E):S\subset Y\}$ and $\mathscr{C}'_{\alpha}(Y)
=\mathscr{C}_{\alpha}(Y)\cup\{Y\}$. Since $Y$ is
$\alpha$-paraconvex, each member of $\mathscr{C}'_{\alpha}(Y)$ is
also $\alpha$-paraconvex, so it is in a good accordance with the
families $\mathscr{F}_{\alpha}(E)$ and $\mathscr{C}'_{\alpha}(E)$.
The following question was posed to the author by V. Gutev.

\begin{question}
\label{quest1}Let $X$ be a $\tau$-collectionwise normal space, $E$
be a Banach space, $0\leq\alpha<1$, and $Y$ be a nonempty
$\alpha$-paraconvex closed subset of $E$, with $w(Y)\leq\tau$. Then,
is it true that every l.s.c.\ $\varphi:X\to\mathscr{C}'_{\alpha}(Y)$
has a continuous selection?
\end{question}

To resolve Question \ref{quest2}, one can try to follow the proof of
(b) of Theorem \ref{main-theorem}. A particular difficulty to do
this is that even to take $f_0:X\to Y$ and construct $f_1$ in a
similar way, some values of $f_1$ may already go out of the set
$Y$.\medskip

For an infinite cardinal number $\tau$, a $T_1$-space $X$ is called
\emph{$\tau$-paracompact} if every open cover $\mathscr{U}$ of $X$,
with $|\mathscr{U}|\leq\tau$, has a locally finite open refinement.
In the special case of $\tau=\omega$, an $\omega$-paracompact space
is called \emph{countably paracompact}. In contrast to
paracompactness, there are $\tau$-paracompact spaces which are not
normal. Of course, a space is paracompact if and only if it is
$\tau$-paracompact for every $\tau$.\medskip

It is well known that if $X$ is $\tau$-paracompact and normal, $E$
is a Banach space, with $w(E)\leq\tau$, then every l.s.c.\
convex-valued mapping $\varphi:X\to\mathscr{F}(E)$ has a continuous
selection (see, \cite{nedev2}). Using exactly the same proof
as for the case of paracompact spaces and the above fact, one gets
the following theorem.

\begin{theorem}
Let $X$ be a $\tau$-paracompact and normal space, $E$ be a Banach
space, with $w(E)\leq\tau$, and let $\varphi:X
\to\mathscr{F}_{\alpha}(E)$ be an l.s.c.\ mapping, where $\alpha<1$.
Then, the following hold\textup{:}
\begin{itemize}
\item[(a)] $\varphi$ has a continuous selection.
\item[(b)]If $r>0$ and $g:X\to E$ is continuous such that
$d(g(x),\varphi(x))<r$ for all $x\in X$, then there exists
$\delta>0$ and a continuous selection $f$ for $\varphi$ such that
$d(g(x),f(x))<\delta r$, $x\in X$.
\end{itemize}
\end{theorem}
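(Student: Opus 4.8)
The plan is to mirror the proof of Theorem \ref{main-theorem} essentially verbatim, replacing each appeal to $\tau$-collectionwise normality by the fact quoted just before the statement, namely that for $\tau$-paracompact normal $X$ and Banach $E$ with $w(E)\leq\tau$ every l.s.c.\ convex-valued $\varphi\colon X\to\mathscr{F}(E)$ admits a continuous selection. The only structural input that Theorem \ref{main-theorem} really used beyond basic normality were two things: Lemma \ref{lemma1} (the $d$-proximal-continuous convex-valued selection lemma) in part (b), and Proposition \ref{prop1} together with the normality of $X$ in part (a). So the strategy is to observe that analogues of both hold in the $\tau$-paracompact normal setting.

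First I would prove part (b). Fix $\gamma$ with $\alpha<\gamma<1$ and $\delta>\sum_{i\geq 0}\gamma^i$. Exactly as in the proof of Theorem \ref{main-theorem}, I would build by induction continuous maps $f_n\colon X\to E$ with $d(f_n(x),\varphi(x))<\gamma^n r$ and $d(f_n(x),f_{n+1}(x))\leq\gamma^n r$, starting from $f_0=g$, and at the inductive step form $\psi_{n+1}(x)=\overline{B_{\gamma^n r}(f_n(x))}$ and $\varphi_{n+1}(x)=\overline{\conv(\varphi(x)\cap B_{\gamma^n r}(f_n(x)))}$. Here is where one must be a little careful: Theorem \ref{main-theorem} invoked Lemma \ref{lemma1}, which is stated for $\tau$-collectionwise normal domains. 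In the $\mathscr{F}_{\alpha}(E)$ setting, however, $\varphi_{n+1}$ is just an l.s.c.\ convex-valued closed-valued mapping, so instead of needing the full strength of Lemma \ref{lemma1} I can simply apply the quoted Nedev-type selection theorem for $\tau$-paracompact normal $X$ to the mapping $x\mapsto \varphi_{n+1}(x)\cap\overline{B_{\gamma^n r}(f_n(x))}$ (which is l.s.c.\ by \cite[Propositions 2.3, 2.5, 2.6]{michael} and convex-valued), obtaining the selection $f_{n+1}$ with $d(f_n(x),f_{n+1}(x))\leq\gamma^n r$. Paraconvexity of $\varphi(x)$ and $\alpha<\gamma$ then give $d(f_{n+1}(x),\varphi(x))\leq\alpha\gamma^n r<\gamma^{n+1}r$, closing the induction; the sequence $(f_n)$ is Cauchy, hence converges to a continuous selection $f$ with $d(f(x),g(x))\leq r\sum_{i\geq 0}\gamma^i<\delta r$.

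For part (a), the argument of Theorem \ref{main-theorem} goes through once one has the following two ingredients in the $\tau$-paracompact normal setting: (i) an increasing closed cover $\{B_n\}$ of $X$ with $B_n\subset\varphi^{-1}(B_{\beta^n}(0))$, and (ii) an increasing open cover $\{U_n\}$ with $\overline{U_n}\subset\varphi^{-1}(B_{\beta^n}(0))$. For (ii) I would use normality together with the standard fact that a normal countably paracompact space shrinks increasing countable open covers; since $\tau$-paracompact ($\tau\geq\omega$) normal spaces are in particular countably paracompact, this is available. For (i) I need a substitute for Proposition \ref{prop1}. The difficulty is that Proposition \ref{prop1} used the Choban–Valov/Nedev u.s.c.\ compact-valued multi-selection result, whose domain hypothesis is $\tau$-collectionwise normality. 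But here $\varphi$ is $\mathscr{F}_{\alpha}(E)$-valued, not $\mathscr{C}'_{\alpha}(E)$-valued, so $\varphi$ need not have compact values, and I cannot quote Proposition \ref{prop1}. Instead I would argue more directly: since $\{B_{\beta^n}(0)\}$ is an increasing open cover of the metrizable (hence normal, countably paracompact) space $E$, shrink it to an increasing closed cover $\{F_n\}$ of $E$ with $F_n\subset B_{\beta^n}(0)$; but $\varphi^{-1}(F_n)$ need not be closed in $X$ since $\varphi$ is only l.s.c. To get around this, note that for an l.s.c.\ closed-valued $\varphi$ into a metric space, $\{x:\varphi(x)\cap F_n\neq\emptyset\}\subset\{x:\varphi(x)\cap B_{\beta^{n+1}}(0)\neq\emptyset\}=\varphi^{-1}(V_{n+1})$ which is open; since $\{\varphi^{-1}(V_n)\}$ is itself an increasing countable open cover of the normal countably paracompact space $X$, shrink it to an increasing closed cover $\{B_n\}$ with $B_n\subset\varphi^{-1}(V_n)$, which is exactly (i). With (i) and (ii) in hand, the inductive construction of compatible selections $f_n\colon\overline{U_n}\to E$ (using (b) on each $\overline{U_n}$, which is again $\tau$-paracompact normal as a closed subspace) and the patching via the open cover $\{U_n\}$ proceeds exactly as in Theorem \ref{main-theorem}.

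The main obstacle, then, is not the selection construction itself but confirming that the two covering-type lemmas survive the weakening from $\tau$-collectionwise normality to $\tau$-paracompactness plus normality; the key observation that unlocks everything is that a $\tau$-paracompact normal space is countably paracompact and normal, so the countable shrinking lemma applies, and that in part (a) one can bypass Proposition \ref{prop1} entirely by shrinking the open cover $\{\varphi^{-1}(V_n)\}$ directly rather than passing through a compact-valued u.s.c.\ multi-selection. I would write the proof as: ``The proof repeats that of Theorem \ref{main-theorem} with the following modifications,'' and then spell out precisely these two substitutions.
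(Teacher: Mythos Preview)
Your proposal is correct and matches the paper's intended argument: the paper simply says ``using exactly the same proof as for the case of paracompact spaces and the above fact,'' meaning one re-runs Michael's proof of Theorem~\ref{m-sel-thm} with Nedev's $\tau$-paracompact-normal convex selection theorem in place of the paracompact one, and this is precisely what your modifications amount to. Your extra care in part~(a)---bypassing Proposition~\ref{prop1} by shrinking the countable open cover $\{\varphi^{-1}(V_n)\}$ directly via countable paracompactness plus normality---is exactly how Michael's original paracompact argument proceeds, so you have recovered the paper's (unwritten) proof rather than deviated from it.
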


In the special case of $\tau=\omega$, the above theorem implies the
following consequence.

\begin{corollary}
Let $X$ be a countably paracompact and normal space, $E$ be a
separable Banach space, and let $\varphi:X
\to\mathscr{F}_{\alpha}(E)$ be an l.s.c.\ mapping, where $\alpha<1$.
Then, the following hold\textup{:}
\begin{itemize}
\item[(a)] $\varphi$ has a continuous selection.
\item[(b)]If $r>0$ and $g:X\to E$ is continuous such that
$d(g(x),\varphi(x))<r$ for all $x\in X$, then there exists
$\delta>0$ and a continuous selection $f$ for $\varphi$ such that
$d(g(x),f(x))<\delta r$, $x\in X$.
\end{itemize}
\end{corollary}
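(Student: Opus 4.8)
The plan is to derive the corollary as the special case $\tau=\omega$ of the preceding theorem, so essentially no new work is required beyond unwinding the cardinal bookkeeping. First I would observe that a separable Banach space $E$ has a countable base, hence $w(E)\leq\omega=\tau$, so the weight hypothesis $w(E)\leq\tau$ of the theorem is met. Second, a countably paracompact space is by definition an $\omega$-paracompact space, so the hypothesis ``$X$ is $\tau$-paracompact'' with $\tau=\omega$ is exactly ``$X$ is countably paracompact.'' Together with the assumed normality of $X$, all hypotheses of the theorem hold with $\tau=\omega$.

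With that in hand, both parts (a) and (b) follow immediately: apply part (a) of the theorem to get a continuous selection for $\varphi$, and apply part (b) of the theorem verbatim, obtaining the same $\delta>0$ and continuous selection $f$ with $d(g(x),f(x))<\delta r$ for all $x\in X$. There is nothing to optimize or adjust, since the conclusion of the corollary is word-for-word the conclusion of the theorem.

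I do not anticipate any genuine obstacle here. The only point that warrants a sentence of justification is the implication ``$E$ separable $\Rightarrow w(E)\leq\omega$,'' which is standard: a countable dense subset of $E$ yields a countable base consisting of balls of rational radius centered at points of that subset. Everything else is a direct instantiation of the general theorem, so the proof is a two- or three-line citation of the previous result.

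\begin{proof}
Since $E$ is a separable Banach space, it has a countable dense subset, and hence the collection of all open balls of rational radius centered at points of that subset is a countable base for $E$; thus $w(E)\leq\omega$. Moreover, a countably paracompact space is precisely an $\omega$-paracompact space. Therefore, with $\tau=\omega$, the space $X$ is $\tau$-paracompact and normal, and $E$ is a Banach space with $w(E)\leq\tau$. Applying the preceding theorem to $\varphi:X\to\mathscr{F}_{\alpha}(E)$ yields both (a) and (b).
\end{proof}
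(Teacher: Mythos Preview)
Your proposal is correct and matches the paper's approach exactly: the paper simply states that the corollary is the special case $\tau=\omega$ of the preceding theorem and gives no further argument, so your unwinding of the weight condition and the identification of countable paracompactness with $\omega$-paracompactness is precisely what is intended.
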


Note that in Theorems \ref{m-sel-thm} and \ref{main-theorem},
$\alpha$ is a fixed constant. Regarding this, the following question
is naturally raised:  do both theorems remain true if to each $x\in
X$, there corresponds an $\alpha(x)<1$ (possibly different for
different $x$) for which $\varphi(x)$ is $\alpha(x)$-paraconvex? A
first attempt in answering the above question was proposed by P.
Semenov \cite{semenov}, who generalized \cite[Theorem 2.1]{michael2}
by replacing the constant $\alpha$ by a function
$h:(0,+\infty)\to[0,1)$ satisfying a certain property (PS).

\begin{theorem}[\cite{semenov}]
\label{funct-paraconv-gen1} Suppose that a function
$h:(0,+\infty)\to[0,1)$ has property \textup{(PS)}, $X$ is a paracompact
space, and $E$ is a Banach space. Then, every l.s.c.\ mapping
$\varphi:X\to\mathscr{F}(E)$ whose values are $h$-paraconvex has a
continuous selection.
\end{theorem}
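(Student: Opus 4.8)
The plan is to reproduce, in the present setting, the iterative scheme behind Theorems~\ref{m-sel-thm} and~\ref{main-theorem}, replacing the geometric sequence of radii $\gamma^{n}r$ by a sequence governed by the function $h$ and invoking property~\textup{(PS)} exactly where geometric convergence was used. Since $X$ is now only paracompact and $\varphi$ is merely closed-valued, at every stage one can appeal directly to Michael's convex-valued selection theorem for paracompact spaces \cite{michael} — in its partial-selection (extension) form — so neither Proposition~\ref{prop1} nor the Choban--Valov multi-selection is needed here.

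First I would prove an approximation statement: if $A\subseteq X$ is closed, $r>0$, and $g:A\to E$ is continuous with $d(g(x),\varphi(x))<r$ for all $x\in A$, then $\varphi\uhr A$ has a continuous selection $f$ close to $g$. The construction is inductive: set $f_{0}=g$, $r_{0}=r$; given a continuous $f_{n}:A\to E$ with $d(f_{n}(x),\varphi(x))<r_{n}$, let
\[
\varphi_{n+1}(x)=\overline{\conv\bigl(\varphi(x)\cap B_{r_{n}}(f_{n}(x))\bigr)},\qquad x\in A,
\]
which is nonempty by the strict inequality, convex, closed, l.s.c.\ by \cite[Propositions 2.3, 2.5 and 2.6]{michael}, and contained in $\overline{B_{r_{n}}(f_{n}(x))}$. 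Michael's theorem yields a continuous selection $f_{n+1}$ of $\varphi_{n+1}$, so automatically $d(f_{n+1}(x),f_{n}(x))\le r_{n}$, and $h$-paraconvexity of $\varphi(x)$ gives $d(f_{n+1}(x),\varphi(x))\le h(r_{n})\,r_{n}$; choosing $r_{n+1}$ just above $h(r_{n})r_{n}$ (small enough not to spoil (PS)) propagates the hypothesis. Property~\textup{(PS)} is precisely what makes the $r_{n}$ summable with $r_{n}\to 0$, so $\{f_{n}\}$ is uniformly Cauchy, its limit $f$ is continuous, $d(f(x),\varphi(x))=0$ forces $f(x)\in\varphi(x)$ by closedness, and $d(f(x),g(x))\le\sum_{n}r_{n}$.

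Next I would deduce the bare selection on $X$ as in the proof of Theorem~\ref{main-theorem}(a): fix $\beta>1$, put $V_{n}=B_{\beta^{n}}(0)$, observe that $\{\varphi^{-1}(V_{n})\}$ is an increasing open cover of the paracompact (hence normal and countably paracompact) space $X$, shrink it to an increasing open cover $\{U_{n}\}$ with $\overline{U_{n}}\subseteq\varphi^{-1}(V_{n})$, and on $A_{n}=\overline{U_{n}}$ run the approximation step starting from the constant map $0$ with radius $\beta^{n}$. Maintaining the compatibility $f_{n+1}\uhr A_{n}=f_{n}$ is where the extension form of the convex selection theorem is used: on $A_{n}$ the already constructed selection $f_{n}$ lies in every $\varphi_{k}$ occurring in the $A_{n+1}$-iteration, so one keeps it fixed on $A_{n}$ throughout, exactly as in \cite[Theorem 2.1]{michael2}. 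The map $f$ with $f\uhr A_{n}=f_{n}$ is then continuous because the $U_{n}$ form an open cover, and it is the desired selection.

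The main obstacle is the behaviour of the radius recursion $r_{n+1}\gtrsim h(r_{n})r_{n}$. In contrast with the geometric case, $h$ need not be monotone, so the fact that $r_{n}$ is decreasing (because $h<1$) does not by itself give summability; and the recursion is not scale-invariant, so the total displacement $\sum_{n}r_{n}$ is a genuinely nonlinear function of $r$, which complicates both the choice of the $r_{n+1}$ and the globalization step, where one needs the $A_{n}$-selection to stay inside $B_{\beta^{n+1}}(0)$ in order to be extended. Property~\textup{(PS)} is exactly the condition abstracted from these two requirements, so the real content of the proof is verifying that (PS) delivers the quantitative control just described; granting that, the rest follows the pattern of Theorems~\ref{m-sel-thm} and~\ref{main-theorem} with only bookkeeping changes.
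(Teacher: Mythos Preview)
The paper does not prove Theorem~\ref{funct-paraconv-gen1}; it is quoted from Semenov~\cite{semenov} purely as background for the collectionwise-normal analogue, Theorem~\ref{funct-paraconv-gen2}, and no argument for it appears in the text. There is therefore nothing in the paper to compare your proposal against.

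For what it is worth, your outline is the natural adaptation of the scheme behind Theorems~\ref{m-sel-thm} and~\ref{main-theorem} and is essentially Semenov's own: one replaces the geometric radii $\gamma^{n}r$ by $r_{n}=H_{n}(r)\,r$, uses the strict domination $h<H$ to absorb the slack between $d(f_{n+1}(x),\varphi(x))\le h(r_{n})r_{n}$ and the next radius, and appeals to Michael's convex selection theorem (in its extension form) at each stage since $X$ is paracompact. Your diagnosis of the difficulty is also accurate: because the recursion $r\mapsto H(r)r$ is not scale-invariant, the total displacement $r\sum_{n}H_{n}(r)$ is a nonlinear function of $r$, and in the globalization one must check that the selection $f_{n}$ on $A_{n}$ actually lands in the first auxiliary map $\varphi_{1}$ of the $A_{n+1}$-iteration, which amounts to the bound $\beta^{n}\sum_{k}H_{k}(\beta^{n})<\beta^{n+1}$. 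Property~(PS) is designed precisely so that these estimates go through; verifying this is, as you say, the real content of the proof, and the rest is bookkeeping parallel to~\cite[Theorem~2.1]{michael2}.
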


Here, for an arbitrary function $H:(0,\infty)\to[0,1)$, a functional sequence
$\{H_n:n<\omega\}$ is defined such that
\[
H_0(t)=1;\ \text{and}\ H_{n+1}(t)=H(H_n(t)t)\cdot H_n(t),\ n<\omega.
\]

A function $h:(0,+\infty)\to[0,1)$ has property (PS) if there is
a function $H:(0,+\infty)\to[0,1)$ strictly dominating $h$ and
such that the series $\sum_{n=0}^{\infty}H_n(t)$ converges for
all $t>0$. For a function $h:(0,+\infty)\to[0,1)$, a closed nonempty
subset $P$ of a Banach space $(E,d)$ is called \emph{$h$-paraconvex}
if for any open ball $B$ of radius $r$ that intercepts the set $P$
and for any point $q\in\overline{\conv(P\cap B)}$, then
$d(q,P)\leq h(r)r$. A closed nonempty subset of a Banach space is
said to be \emph{functionally paraconvex} if it is $h$-paraconvex
for some function $h:(0,+\infty)\to[0,1)$ (see, \cite{semenov}).
Using the technique in the proof of Theorem \ref{main-theorem}, the
following result is easily proved.

\begin{theorem}
\label{funct-paraconv-gen2}Suppose that a function $h:(0,\infty)\to
[0,1)$ has property \textup{(PS)}, $X$ is a $\tau$-collectionwise
normal space, and $E$ is a Banach space with $w(E)\leq\tau$. Then,
every l.s.c.\ mapping $\varphi:X\to\mathscr{C}'(E)$ whose values
are $h$-paraconvex has a continuous selection.
\end{theorem}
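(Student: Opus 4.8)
The plan is to transfer, essentially verbatim, the proof of Theorem~\ref{main-theorem}, replacing the geometric sequence $\gamma^n$, $n<\omega$, by the functional sequence $\{H_n:n<\omega\}$ associated with a function $H:(0,+\infty)\to[0,1)$ that strictly dominates $h$ and for which $\sum_{n=0}^{\infty}H_n(t)<\infty$ for every $t>0$; such an $H$ exists precisely because $h$ has property~\textup{(PS)}. As in Theorem~\ref{main-theorem}, I would first establish a functional analogue of part (b): \emph{if $r>0$ and $g:X\to E$ is continuous with $d(g(x),\varphi(x))<r$ for every $x\in X$, then $\varphi$ has a continuous selection $f$ with $d(g(x),f(x))\le r\sum_{n=0}^{\infty}H_n(r)$ for all $x\in X$} (the bound now depends on $r$, but this is immaterial below), and then deduce the theorem from it exactly as part (a) of Theorem~\ref{main-theorem} is deduced from part (b).

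For the functional (b), I would construct continuous maps $f_n:X\to E$, $n<\omega$, such that for all $n$ and all $x\in X$
\begin{equation*}
d(f_n(x),\varphi(x))<H_n(r)\,r
\qquad\text{and}\qquad
d(f_n(x),f_{n+1}(x))\le H_n(r)\,r ,
\end{equation*}
starting from $f_0=g$, which works since $H_0(r)r=r$. Given $f_n$, put $\psi_{n+1}(x)=\overline{B_{H_n(r)r}(f_n(x))}$ and $\varphi_{n+1}(x)=\overline{\conv\big(\varphi(x)\cap B_{H_n(r)r}(f_n(x))\big)}$. Exactly as in the proof of Theorem~\ref{main-theorem}, $\psi_{n+1}$ is $d$-continuous (hence $d$-proximal continuous) and convex-valued, while, by the inductive bound on $f_n$ together with \cite[Propositions 2.3, 2.5 and 2.6]{michael}, $\varphi_{n+1}$ is a nonempty-valued l.s.c.\ convex-valued multi-selection of $\psi_{n+1}$ with $\varphi_{n+1}(x)$ compact whenever $\varphi_{n+1}(x)\neq\psi_{n+1}(x)$. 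Hence Lemma~\ref{lemma1} supplies a continuous selection $f_{n+1}$ for $\varphi_{n+1}$, which automatically obeys $d(f_n(x),f_{n+1}(x))\le H_n(r)r$. The crucial point is the estimate for the new approximation: the ball $B_{H_n(r)r}(f_n(x))$ meets the $h$-paraconvex set $\varphi(x)$ and $f_{n+1}(x)\in\overline{\conv(\varphi(x)\cap B_{H_n(r)r}(f_n(x)))}$, so
\begin{equation*}
d(f_{n+1}(x),\varphi(x))\le h\big(H_n(r)r\big)\cdot H_n(r)r< H\big(H_n(r)r\big)\cdot H_n(r)r=H_{n+1}(r)\,r ,
\end{equation*}
the strict inequality being exactly where ``$H$ strictly dominates $h$'' is used; thus $f_{n+1}$ satisfies the inductive hypothesis at level $n+1$. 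Finally, $\sum_{n}H_n(r)<\infty$ forces $H_n(r)\to 0$ and makes $\{f_n\}$ uniformly Cauchy, hence convergent to a continuous $f:X\to E$; closedness of $\varphi(x)$ together with $d(f_n(x),\varphi(x))<H_n(r)r\to 0$ gives $f(x)\in\varphi(x)$, and telescoping the inequalities $d(f_n(x),f_{n+1}(x))\le H_n(r)r$ yields $d(g(x),f(x))\le r\sum_n H_n(r)$, just as in Theorem~\ref{main-theorem}.

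To conclude, I would repeat the proof of (a) of Theorem~\ref{main-theorem} line by line, with the functional (b) above playing the role of Theorem~\ref{main-theorem}(b): pick $\beta>1$ with $\varphi(x)\cap B_{\beta}(0)\neq\emptyset$ for some $x\in X$, set $V_n=B_{\beta^n}(0)$, apply Proposition~\ref{prop1} (whose hypotheses hold since $E$ is a Banach space, hence completely metrizable, with $w(E)\le\tau$) to get an increasing closed cover $\{B_n:n\in\N\}$ of $X$ with $B_n\subset\varphi^{-1}(V_n)$, use normality to insert open sets $B_n\subset U_n\subset\overline{U_n}\subset\varphi^{-1}(V_n)$ with $U_n\subset U_{n+1}$, let $A_n=\overline{U_n}$, and build a coherent family of continuous selections $f_n:A_n\to E$ for $\varphi\uhr A_n$ with $f_{n+1}\uhr A_n=f_n$ as in the proof of (a) in \cite[Theorem 2.1]{michael2}; here one uses that a closed subspace of a $\tau$-collectionwise normal space is again $\tau$-collectionwise normal and that $\varphi\uhr A_n$ is again an l.s.c.\ map into $\mathscr{C}'(E)$ with $h$-paraconvex values, so the functional (b) applies at every step. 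Gluing the $f_n$ along the open cover $\{U_n:n\in\N\}$ produces the required continuous selection $f:X\to E$.

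The step I expect to be the main obstacle is the one genuinely new here: making the recursion close up. It is not enough to know $d(f_{n+1}(x),\varphi(x))\le h(H_n(r)r)\,H_n(r)r$; one needs this to be \emph{strictly} below $H_{n+1}(r)r$, because the inductive hypothesis at level $n+1$ requires a ball that genuinely intercepts $\varphi(x)$. This is precisely what property~\textup{(PS)} is designed to deliver --- a strictly dominating $H$ with convergent functional series --- and it replaces the trivial inequality $\alpha\gamma^nr<\gamma^{n+1}r$ used in Theorem~\ref{main-theorem}. All the remaining ingredients (the l.s.c.\ and compactness bookkeeping for $\varphi_{n+1}$, the use of Lemma~\ref{lemma1}, and the layered construction in (a)) are identical to those in the proof of Theorem~\ref{main-theorem}.
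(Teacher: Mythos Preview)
Your proposal is correct and follows exactly the approach the paper intends: the paper does not give a separate proof of this theorem but merely states that it ``is easily proved'' by ``using the technique in the proof of Theorem~\ref{main-theorem}'', i.e., by rerunning that argument with Semenov's functional sequence $\{H_n\}$ in place of the geometric sequence $\gamma^n$ and invoking the strict domination in property~\textup{(PS)} at the inductive step---precisely what you do.
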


Note that if the function $h$ is equal to a constant
$\alpha<1$, then $h$-paraconvexity is equivalent to
$\alpha$-paraconvexity; and Theorem \ref{funct-paraconv-gen1}
obviously implies Theorem \ref{m-sel-thm}, while Theorem
\ref{main-theorem} is a consequence of Theorem
\ref{funct-paraconv-gen2}.\medskip

Theorem \ref{m-sel-thm} remains true for arbitrary domain $X$,
provided that the continuity of $\varphi:X\to\mathscr{F}_{\alpha}(E)$
is strengthened to $d$-continuity; that is, the following holds.

\begin{theorem}
Let $X$ be a topological space, $E$ be a Banach space, and
$\varphi:X\to\mathscr{F}_{\alpha}(E)$ be a $d$-continuous mapping,
for some $0\leq\alpha<1$. Then, $\varphi$ has a continuous selection.
\end{theorem}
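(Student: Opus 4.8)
The plan is to reduce the statement to Michael's paraconvex selection theorem on paracompact domains (Theorem \ref{m-sel-thm}) by observing that the $d$-continuity of $\varphi$ means precisely that $\varphi$, viewed as a single-valued map, is continuous into a suitable \emph{metrizable} hyperspace built from $\mathscr{F}_\alpha(E)$. Once that is recognised, the whole argument is essentially a one-line composition with a selection supplied by Theorem \ref{m-sel-thm}.

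First I would topologise $\mathscr{F}_\alpha(E)$. Since the Hausdorff distance $d_H$ may take the value $+\infty$ on $\mathscr{F}_\alpha(E)$ (for instance, a point and a line through it are at infinite Hausdorff distance), I would pass to the truncation $\rho(F,F')=\min\{d_H(F,F'),1\}$, which is a genuine metric inducing the Hausdorff topology; the triangle inequality for $\rho$ follows at once from the one for $d_H$. Write $\mathscr{H}$ for the resulting metric space. Being metrizable, $\mathscr{H}$ is paracompact (A.\,H.\ Stone). Moreover $\varphi:X\to\mathscr{H}$ is continuous: the definition of $d$-continuity says exactly that for each $\varepsilon\in(0,1)$ and each $x\in X$ there is a neighbourhood $V$ of $x$ with $d_H(\varphi(x),\varphi(z))\le\varepsilon$, hence $\rho(\varphi(x),\varphi(z))\le\varepsilon$, for all $z\in V$, and the balls of radius less than $1$ form a neighbourhood base at each point of $\mathscr{H}$.

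The core step is to consider the tautological set-valued map $\iota:\mathscr{H}\to\mathscr{F}_\alpha(E)$ given by $\iota(F)=F$ and to check that it is l.s.c. Indeed, let $U\subset E$ be open and let $F_0\in\iota^{-1}(U)$; choose $p\in F_0\cap U$ and $\varepsilon\in(0,1)$ with $B_\varepsilon(p)\subset U$. If $\rho(F,F_0)<\varepsilon$ then $d_H(F,F_0)<\varepsilon$, so, since $p\in F_0$, we get $d(p,F)\le d_H(F_0,F)<\varepsilon$, and therefore some $q\in F$ satisfies $q\in B_\varepsilon(p)\subset U$; hence $F\in\iota^{-1}(U)$, showing that $\iota^{-1}(U)$ is open. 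Thus $\iota$ is an l.s.c.\ $\mathscr{F}_\alpha(E)$-valued mapping on the paracompact space $\mathscr{H}$, so Theorem \ref{m-sel-thm}(a) provides a continuous selection $s:\mathscr{H}\to E$, that is, $s(F)\in F$ for every $F\in\mathscr{F}_\alpha(E)$. Then $f=s\circ\varphi:X\to E$ is continuous, being a composition of continuous maps, and $f(x)=s(\varphi(x))\in\varphi(x)$ for every $x\in X$, so $f$ is the required selection.

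I do not expect a genuinely hard step: the content lies entirely in spotting the reduction, after which everything is routine. The two points that deserve a little care are the truncation of $d_H$ to $\rho$ — so that one really has a metric, and hence the paracompactness of $\mathscr{H}$ needed to apply Theorem \ref{m-sel-thm} — and the verification that $\iota$ is l.s.c., both carried out above. One could equally well run the argument with $\mathscr{H}$ replaced by the metrizable subspace $\varphi(X)$, which changes nothing.
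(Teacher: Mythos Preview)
Your argument is correct. Note, however, that the paper states this theorem without proof, so there is no argument in the paper to compare yours against. The factorisation you use --- equipping $\mathscr{F}_\alpha(E)$ with the (truncated) Hausdorff metric, observing that $d$-continuity of $\varphi$ is precisely continuity into this metrizable (hence paracompact) hyperspace, checking that the tautological map $\iota(F)=F$ is l.s.c., and then composing $\varphi$ with a selection of $\iota$ supplied by Theorem~\ref{m-sel-thm} --- is the standard and natural way to establish such a result, and all the verifications you give (the triangle inequality for $\rho$, the l.s.c.\ property of $\iota$, and the identification of $d$-continuity with Hausdorff-metric continuity) are sound.
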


It is unclear whether the above theorem holds when one further
relaxes the continuity of the mapping $\varphi$ to $d$-proximal
continuity. The following question was posed to the author by
V. Gutev.

\begin{question}
\label{quest2} Let $X$ be a topological space, $E$ be a Banach
space, and $\varphi:X\to\mathscr{F}_{\alpha}(E)$ be a $d$-proximal
continuous mapping, for some $0\leq\alpha<1$. Then, is it true that
$\varphi$ has a continuous selection?
\end{question}

Question \ref{quest2} is open even for the special case of
continuous set-valued mappings.
\vspace{1.5cm}

The author would like to express his deep gratitude to Professor V.
Gutev for introducing him to this topic and guiding him in the
preparation of this paper. The author would like also to thank
the referee for his valuable comments and suggestions.

\vspace{1cm}

\newcommand{\noopsort}[1]{} \newcommand{\singleletter}[1]{#1}
\providecommand{\bysame}{\leavevmode\hbox to3em{\hrulefill}\thinspace}
\providecommand{\MR}{\relax\ifhmode\unskip\space\fi MR }
\providecommand{\MRhref}[2]{%
  \href{http://www.ams.org/mathscinet-getitem?mr=#1}{#2}
}
\providecommand{\href}[2]{#2}

\end{document}